\theoremstyle{plain}
\newtheorem{theorem}{Theorem}[section]
\newtheorem{proposition}[theorem]{Proposition}
\newtheorem{lemma}[theorem]{Lemma}
\numberwithin{equation}{section}
\theoremstyle{definition}
\newtheorem{problem}[theorem]{Problem}
\newtheorem{remark}[theorem]{Remark}
\newcommand{\C}{\mathbb{C}}
\newcommand{\R}{\mathbb{R}}
\newcommand{\cF}{\mathcal{F}}
\newcommand{\cZ}{\mathcal{Z}}
\renewcommand\l{\ell}
\renewcommand{\ss}{\text{Ss}}
\DeclareMathOperator{\st}{St}
\DeclareMathOperator{\lk}{Lk}
\DeclareMathOperator{\wed}{Wed}
\DeclareMathOperator{\pic}{Pic}
\title[Stellar subdivisions, wedges, and Buchstaber numbers]{Stellar subdivisions, wedges and Buchstaber numbers}
\author[S. Choi]{Suyoung Choi}
\address{Department of mathematics, Ajou University, 206, World cup-ro, Yeongtong-gu, Suwon 16499,  Republic of Korea}
\email{schoi@ajou.ac.kr}
\author[H. Jang]{Hyeontae Jang}
\address{Department of mathematics, Ajou University, 206, World cup-ro, Yeongtong-gu, Suwon 16499, Republic of Korea}
\email{a24325@ajou.ac.kr}
\date{\today}
\subjclass[2020]{57S12, 14M25}
\keywords{Buchstaber number, Stellar subdivision, Wedge operation, toric topology, Inequality}
\thanks{This work was supported by the National Research Foundation of Korea Grant funded by the Korean Government (NRF-2021R1A6A1A10044950).}
\begin{document}
\begin{abstract}
A seed is a PL~sphere that is not obtainable by a wedge operation from any other PL~sphere. 
In this paper, we study two operations on PL~spheres, known as the stellar subdivision and the wedge, that preserve the maximality of Buchstaber numbers and polytopality.
We construct a new polytopal toric colorable seed from these two operations.
As a corollary, we prove that the toric colorable seed inequality established by Choi and Park is tight.
\end{abstract}
\maketitle

%\tableofcontents
\section{Introduction}
Let $K$ be an $(n-1)$-dimensional simplicial complex on~$[m]=\{1,2,\ldots, m\}$ and $(X,Y)$ a topological pair.
The \emph{polyhedral product}~$(\underline{X}, \underline{Y})^K$ is a subspace of~$X^m$ with respect to~$K$:
$$
	(\underline{X}, \underline{Y})^K \coloneq \bigcup_{\sigma \in K} \left\{ (x_1, \ldots, x_m) \in X^m \mid x_i \in Y \text{ when $i \not\in \sigma$ } \right\}.
$$
One of main considerations is the \emph{moment-angle complex}~$\cZ_K \coloneq (\underline{D^2}, \underline{S^1})^K$ of~$K$, where $D^2 = \{ z \in \C \mid |z|\leq 1 \}$ is the $2$-dimensional disk, and $S^1 = \partial D^2$ is its boundary circle. 
The canonical action of~$S^1$ on~$D^2$ induces the action of the $m$-dimensional torus~$T^m = S^1 \times \cdots \times S^1$ on~$\cZ_K$.
The \emph{Buchstaber number}~$s(K)$ of~$K$ is the maximum number~$r$ such that there is an $r$-dimensional subtorus~$H$ of~$T^m$ freely acting on $\cZ_K$.

It is known that the following inequality holds \cite{Buchstaber-Panov2015, Erokhovets2014}:
$$
	1 \leq s(K) \leq m-n.
$$
The case when $K$ is a PL~sphere with $s(K)=m-n$, particulary when it is a polytopal PL~sphere, is of special interest because many significant toric spaces, including complete non-singular toric varieties, simply called \emph{toric manifolds}, and their topological generalizations such as \emph{quasitoric manifolds} \cite{Davis-Januszkiewicz1991} and \emph{topological toric manifolds} \cite{Ishida-Fukukawa-Masuda2013}, are formed in $\cZ_K/H$, where $K$ is a (polytopal) PL~sphere and $H$ is a free torus action of rank $m-n$ on $\cZ_K$.
Hence, such PL~spheres are said to be \emph{toric colorable} and the characterization of toric colorable PL~spheres is of considerable importance. 

The breakthrough in addressing this problem is the approach by Choi and Park.
The \emph{wedge} is an operation on simplicial complexes that preserves the PL~sphereness and polytopality, and a PL~sphere~$K$ is called a \emph{seed} if it cannot be obtained by a wedge operation from any other PL~sphere.
One advantageous property of the wedge operation in toric topology is that it preserves the maximality of the Buchstaber number \cite{Ewald1986} and does not change the difference of the number of vertices and the dimension.
Therefore, since every toric colorable PL~sphere can be obtained from toric colorable seeds by a sequence of wedges, it is enough to consider toric colorable seeds.
In their paper~\cite{CP_wedge_2}, Choi and Park proved that for a fixed~$m-n$, the number of $(n-1)$-dimensional toric colorable seeds~$K$ with $m$ vertices is finite.
More precisely, $K$ must satisfy the inequality, which we refer to as the \emph{toric colorable seed inequality},
\begin{equation} \label{eqn:inequality}
	m \leq 2^{m-n}-1.
\end{equation} 
This inequality transforms the problem of finding all PL~spheres with a maximal Buchstaber number into a finite problem for each fixed $p \coloneq m-n$.
An interesting question is whether this inequality is tight. 
If the upper bound of~\eqref{eqn:inequality} can be reduced, it would further transform the problem into a smaller finite problem.
It is known that the inequality is tight for~$p=3$ and~$4$.
Denote by~$C^4(7)$ the cyclic polytope of dimension~$4$ with $7$ vertices. 
The boundary complex of~$C^4(7)$ is a seed, and, by~\cite{Erokhovets2011}, its Buchstaber number is maximal.
Hence, the inequality is tight for~$p=3$.
According to~\cite{Choi-Jang-Vallee2023}, there are four PL~spheres that confirms the tightness for~$p=4$ as well.

In this paper, we construct a new polytopal toric colorable seed  from an old one using two operations, the stellar subdivision and the wedge on PL~spheres, that preserve the maximality of Buchstaber numbers and polytopality.
Refer Theorem~\ref{thm: main}.
By using this result, we show that the inequality~\eqref{eqn:inequality} is tight for all $p \geq 3$, even for the class of polytopal PL~spheres.
\begin{restatable*}{corollary}{tight} \label{cor: existence}
    Assume that $p \geq 3$.
    For any $m \leq 2^{p}-1$ and $n \geq 2$ with $p=m-n$, there exists a polytopal $(n-1)$-dimensional seed~$K$ with $m$ vertices such that $s(K) = p$.
\end{restatable*}

From the above corollary, one can precisely determine the conditions on $m$ and $n$ for an $(n-1)$-dimensional seed~$K$ with $m$ vertices to support quasitoric manifolds or topological toric manifolds.
We can ask a stronger question, namely, the conditions on $m$ and $n$ for $K$ to support a toric manifold.
In this case, the inequality~\eqref{eqn:inequality} is not tight.
When $p=3$, $m$ can be at most $6$, which is less than $2^{3}-1=7$, in this class due to~\cite{Kleinschmidt-Sturmfels1991}.
\begin{problem}
    Let $K$ be the underlying complex of a non-singular complete fan with $m$ rays in~$\R^n$.
    Assume that $K$ is a seed. 
    For a fixed $m-n$, find the sharp upper bound of~$m$. 
\end{problem}

\section{Basic operations on simplicial complexes}
Let $K$ be a simplicial complex on $[m]=\{1,2,\ldots,m\}$.
A non-empty element of $K$ is called a \emph{face} of $K$.
Any singleton in $K$ is often called a \emph{vertex}, and a maximal face of $K$ with respect to inclusion is called a \emph{facet}.
We often identify a vertex $\{v\}$ of $K$ with its unique element $v$ as well as with the simplicial complex consisting of the unique vertex $v$.
The \emph{dimension} of a face $\sigma$ is $|\sigma| - 1$, and $K$ is said to be \emph{pure} if the dimensions of all facets are the same.
The \emph{dimension} of pure simplicial complex $K$ is the dimension of its facet.
Throughout this paper, every simplicial complex is assumed to be pure.

For any face $\sigma$ of $K$, the \emph{boundary complex} of $\sigma$ is the simplicial complex $\partial \sigma = \{\tau \in K \mid \tau \subsetneq \sigma \}$, the \emph{star} of $\sigma$ in $K$ is the simplicial complex 
$$
    \st_K(\sigma) = \{\tau \in K \mid \tau \cup \sigma \in K\},
$$
and the \emph{link} of $\sigma$ in $K$ is the simplicial complex 
$$
    \lk_K(\sigma) = \{\tau \in K \mid \tau \cup \sigma \in K,   \sigma \cap \tau = \varnothing  \}.
$$
For another simplicial complex $L$ whose vertex set is disjoint to that of $K$, the \emph{join} of $K$ and~$L$ is the simplicial complex 
$$
    K \ast L = \{\sigma \cup \tau \mid \sigma \in K,   \tau \in L \}.
$$
The \emph{stellar subdivision} of $K$ at $\sigma$ is the simplicial complex 
\begin{equation} \label{eq: ss}
    \ss_\sigma(K) =  K\setminus\st_\sigma(K) \cup v_\sigma \ast \partial \sigma \ast \lk_K(\sigma), 
\end{equation} 
where $v_\sigma$ is a new vertex.

Let $v$ be a vertex of $K$, and $I$ the $1$-dimensional simplicial complex with two vertices $v$ and $v'$.
The \emph{wedge} of $K$ at $v$ is the simplicial complex 
\begin{equation}\label{eq: wed}
    \wed_v(K) = I \ast \lk_K(v) \cup \partial I \ast \{\sigma \in K \mid v \not \in \sigma \}.
\end{equation}
The face $\{v,   v'\}$ of $\wed_v(K)$ is called a \emph{wedged edge} of $\wed_v(K)$.
Note that $\lk_{\wed_{v}(K)}(v')=K$, and $\lk_{\wed_v(K)}(v)$ is isomorphic to $K$ by identifying $v'$ and $v$.
In this sense, $v'$ can be regarded as a copy of $v$.

There is another construction of a wedge using minimal non-faces, known as the \emph{$J$-construction}, established in~\cite{BBCG2015}.
The set of minimal non-faces of $\wed_v(K)$ is obtained by duplicating $v$ as~$v$ and~$v'$ in each minimal non-face of $K$.
In this construction, one can observe that wedge operations are associative and commutative up to suitable vertex identification.
Then we can use the notation~$K(J)$ for a positive integer tuple $J=(j_1, j_2, \dots, j_m)$ to refer the simplicial complex obtained by a sequence of wedge operations with $j_v$ copies $v=v^{(0)}, v'=v^{(1)} \ldots, v^{(j_v-1)}$ of each vertex $v$.
To clarify, we note that $K$ is a subcomplex of $K(J)$, that is, any face of $K$ is a face of $K(J)$.
Moreover, choosing $0 \leq s_v \leq j_v-1$ for each $v \in [m]$, then for a face~$\sigma$ of~$K(J)$, the set~$\sigma^{(s_1, s_2, \dots, s_m)}$ obtained by replacing $v \in \sigma$ by~$v^{(s_v)}$ is also a face of~$K(J)$.
In such a way, we denote the set~$\{1^{(s_1)}, 2^{(s_2)}, \dots, m^{(s_m)}\}$ by~$[m]^{(s_1, s_2, \dots, s_m)}$.
The \emph{assembled face} of $K(J)$ by $(s_1, s_2, \dots, s_m)$ denotes the set $[m]^{(s_1, s_2, \dots, s_m)} \setminus \{v \in [m] \mid j_v = 1 \}$.
The following lemma shows that any assembled face is literally a face of $K(J)$.

\begin{lemma} \label{lem:special_face_of_K(J)}
    Let $K$ be a simplicial complex on $[m]$, and $J=(j_1, j_2, \dots, j_m)$ a positive integer tuple.
    Choose $0 \leq s_v \leq j_v - 1$ for each $v \in [m]$.
    Then $[m]^{(s_1, s_2, \dots, s_m)} \setminus \{v \in [m] \mid j_v = 1 \}$ is a face of $K(J)$.
\end{lemma}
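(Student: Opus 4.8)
The plan is to induct on the number of vertices $v \in [m]$ with $j_v > 1$, reducing the general case to a single wedge. When every $j_v = 1$ we have $K(J) = K$ and the set in question is empty, which is a face by convention, so this is the base case. For the inductive step, fix a vertex $w \in [m]$ with $j_w > 1$, and write $J = J' + e_w$ where $J' = (j_1, \dots, j_w - 1, \dots, j_m)$ (interpreting $j_w - 1 \geq 1$). By the associativity and commutativity of the wedge noted after the $J$-construction, $K(J) = \wed_{w}(K(J'))$, where the wedge is taken at the copy $w^{(j_w-1)}$ and introduces its duplicate; equivalently we may take $s_w = j_w - 1$ without loss of generality by relabelling copies. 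The idea is that the assembled face of $K(J)$ decomposes as the assembled face of $K(J')$ together with (possibly) the two vertices $w^{(j_w-2)}, w^{(j_w-1)}$ forming part of the wedged edge, and we check this lies in $\wed(K(J'))$ using the explicit formula~\eqref{eq: wed}.

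First I would set up notation carefully: let $F = [m]^{(s_1,\dots,s_m)} \setminus \{v : j_v = 1\}$ be the claimed face of $K(J)$, and let $F' = [m]^{(s_1, \dots, s_m)} \setminus \{v : j_v' = 1\}$ be the corresponding assembled face of $K(J')$ — note $j_v' = j_v$ for $v \neq w$, and $j_w' = j_w - 1$. If $j_w - 1 > 1$, then $w$ contributes a copy to both $F$ and $F'$, and since in $\wed_w(K(J'))$ one has $\lk_{\wed_w(K(J'))}(w^{(j_w-1)}) \cong K(J')$ with $w^{(j_w-1)}$ a copy of $w^{(j_w-2)}$ (per the remark that $v'$ is a copy of $v$), the set $F$ is obtained from the face $F'$ of $K(J')$ by the allowed copy-substitution within a single wedge — here I would invoke the stated property that replacing $v \in \sigma$ by $v^{(s_v)}$ preserves faceness. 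If instead $j_w - 1 = 1$, then $w$ is dropped from $F'$ but retained (as $w^{(s_w)}$) in $F$; here I use that in $\wed_w(K(J'))$ the wedged edge $\{w^{(0)}, w^{(1)}\}$ — in our indexing $\{w^{(j_w-2)}, w^{(j_w-1)}\}$ — is a face, and more generally $\partial I$ joined with the relevant subcomplex, together with $\lk_{K(J')}(w) = $ (the rest of $F'$), assembles to show $F$ is a face.

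The main obstacle I anticipate is bookkeeping rather than conceptual: correctly matching the abstract copy-labels $v^{(0)}, \dots, v^{(j_v-1)}$ against the concrete two-vertex picture $\{v, v'\}$ produced by one application of $\wed$, and ensuring the "drop vertices with $j_v = 1$" convention interacts correctly with the induction when $j_w$ passes from $2$ to $1$. A clean way to sidestep the worst of this is to prove the slightly stronger statement directly from the $J$-construction via minimal non-faces: $F$ fails to be a face of $K(J)$ iff $F$ contains a minimal non-face of $K(J)$, and every minimal non-face of $K(J)$ is (by the cited result of~\cite{BBCG2015}) the full duplicate $\{v^{(0)}, \dots, v^{(j_v-1)} : v \in \tau\}$ of some minimal non-face $\tau$ of $K$ — so in particular, for any $v$ with $j_v > 1$, a minimal non-face of $K(J)$ either contains all $j_v$ copies of $v$ or none of them. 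Since $F$ contains at most one copy of each such $v$ (namely $v^{(s_v)}$) and $j_v \geq 2$, $F$ cannot contain any minimal non-face, hence $F \in K(J)$. I would present this minimal-non-face argument as the main proof, as it is shortest and most transparent, and perhaps remark on the inductive/wedge-formula viewpoint as an alternative.
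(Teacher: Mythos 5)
Your main (minimal non-face) argument is correct, and it takes a genuinely different route from the paper. The paper's proof is exactly the inductive/wedge-formula viewpoint you sketch first and then set aside: it reduces to the case $s_1=\dots=s_m=0$ by the copy-substitution property stated just before the lemma, and then iterates the single observation, read off from \eqref{eq: wed}, that $\sigma\cup\{v\}$ is a face of $\wed_v(K)$ whenever $\sigma\in K$; after that reduction the label bookkeeping you were worried about disappears. Your proof instead invokes the description of the minimal non-faces of $K(J)$ coming from the $J$-construction (stated in the paper, following \cite{BBCG2015}): each minimal non-face of $K(J)$ is the full duplicate of a minimal non-face $\tau$ of $K$, while the assembled set $F$ contains at most one of the $j_v\geq 2$ copies of each duplicated vertex, so $F$ contains no minimal non-face and is therefore a face. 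This is shorter, avoids induction altogether, and also handles ghost vertices transparently; what each approach buys is thus a trade-off between the paper's elementary use of \eqref{eq: wed} and your reliance on the (cited, not reproved) minimal non-face characterization, which is on the same footing as the paper's reliance on the copy-substitution remark. One half-sentence is missing from your argument: a minimal non-face $\tau$ of $K$ all of whose vertices have $j_v=1$ survives unchanged in $K(J)$ and contains no copies of duplicated vertices, so your ``all copies or none'' dichotomy does not apply to it; it is excluded simply because $F$ contains no vertex with $j_v=1$ at all. Add that remark and the proof is complete.
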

\begin{proof}
    If we prove the case $s_1 = s_2 = \dots = s_m = 0$, then the statement holds by the above argument.
    In this case, it is enough to observe that for a vertex~$v$ and a face~$\sigma$ of~$K$, the set~$\sigma \cup \{v\}$ is a face of~$\wed_v(K)$ by the construction~\eqref{eq: wed}.
\end{proof}

A simplicial complex~$K$ is called a \emph{PL~sphere} if there exists a subdivision of $K$ such that it can be expressed as a subdivision of the boundary complex of a simplex.
A PL~sphere is called a \emph{seed} if it is not a wedge of any other PL~sphere.
Then any PL~sphere $K$ is written as $K = K'(J)$ for some seed $K'$ and some positive integer tuple $J$.

\begin{proposition}[\cite{Ewald-Shephard1974, Choi-Park2016, Choi-Jang-Vallee2023}] \label{prop:ss-wed_preserve_poly}
    If $K$ is a PL~sphere, then both $\ss_\sigma (K)$ and~$K(J)$ are also PL~spheres for any $\sigma \in K$ and any positive integer tuple~$J$.
    In particular, if $K$ is polytopal, then so are $\ss_\sigma (K)$ and~$K(J)$.   
\end{proposition}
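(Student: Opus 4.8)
\emph{Proof plan.} The statement bundles four claims, and the plan is to prove them in the order: $\ss_\sigma(K)$ is a PL~sphere; $K(J)$ is a PL~sphere; $\ss_\sigma(K)$ is polytopal; $K(J)$ is polytopal. The two claims about $K(J)$ I would reduce immediately to the single-wedge case, since wedges are associative and commutative up to vertex identification: $K(J)$ is obtained from $K$ by a finite sequence of operations $L\mapsto\wed_w(L)$, so it suffices to handle $\wed_v(K)$ and induct on the number $\sum_w(j_w-1)$ of wedges. The unifying observation behind the PL~statements is that neither operation changes the PL~homeomorphism type of the underlying space: $\ss_\sigma$ is literally a simplicial subdivision, and $\wed_v$ is a suspension up to PL~homeomorphism.

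For the PL~sphere statements: using $\st_K(\sigma)=\overline\sigma\ast\lk_K(\sigma)$, where $\overline\sigma$ is the full simplex on the vertex set of $\sigma$, a comparison with \eqref{eq: ss} shows that the only faces of $K$ removed in $\ss_\sigma(K)$ are those containing $\sigma$, and they are replaced by $v_\sigma\ast\partial\sigma\ast\lk_K(\sigma)$; placing $v_\sigma$ at the barycenter of $|\overline\sigma|$ makes $v_\sigma\ast\partial\sigma$ a subdivision of $\overline\sigma$, whence $|\ss_\sigma(K)|=|K|$ and every face of $\ss_\sigma(K)$ lies in a face of $K$. Thus $\ss_\sigma(K)$ is a subdivision of $K$ and, since subdivisions preserve PL~sphereness, it is a PL~sphere. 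For $\wed_v(K)$, put $K_{\hat v}=\{\tau\in K\mid v\notin\tau\}$, so that $K=\st_K(v)\cup K_{\hat v}$ with $\st_K(v)=v\ast\lk_K(v)$. Both $\wed_v(K)$ and the suspension $\susp K$ split along a copy of $\susp(\lk_K(v))$ into a part isomorphic to $\susp(K_{\hat v})$ and a complementary PL~ball bounded by that copy --- for the wedge these are $\partial I\ast K_{\hat v}$ and $I\ast\lk_K(v)$, for the suspension they are $\susp(K_{\hat v})$ and $\susp(\st_K(v))$ --- where the containment $\lk_K(v)\subseteq K_{\hat v}$ pins down the splitting loci and $\lk_K(v)$ being a PL~sphere makes the complementary pieces balls. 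Matching the $\susp(K_{\hat v})$-parts by the obvious isomorphism and the two balls by a PL~homeomorphism extending the identification of their boundaries gives $|\wed_v(K)|\cong_{\mathrm{PL}}|\susp K|$; since the suspension of a PL~$(n-1)$-sphere is a PL~$n$-sphere, $\wed_v(K)$ is a PL~sphere, and the induction yields the claim for $K(J)$.

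For the polytopal statements, suppose $K=\partial P$ with $P$ a simplicial $n$-polytope. For the stellar subdivision I would take a point $x$ obtained from the barycenter of the face $\operatorname{conv}(\sigma)$ of $P$ by a small perturbation in a direction in the relative interior of the normal cone of $P$ at $\operatorname{conv}(\sigma)$, so that $x$ lies beyond exactly the facets of $P$ containing $\sigma$ and beneath all the others; the beyond/beneath description of $\operatorname{conv}(P\cup\{x\})$ then yields the facet list of $\ss_\sigma(\partial P)$, namely the facets of $P$ avoiding $\sigma$ together with $\{x\}\cup\tau\cup\rho$ for $\tau$ a facet of $\partial\sigma$ and $\rho$ a facet of $\lk_K(\sigma)$. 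For $\wed_v(K)$ I would pass to the dual simple polytope $P^*$, pick an affine functional $\l\colon\R^{n}\to\R$ with $\l\ge 0$ on $P^*$ and $\{\l=0\}\cap P^*=F_v$ the facet of $P^*$ dual to $v$, and form the $(n+1)$-polytope
$$
	W=\{(y,t)\in\R^{n}\times\R \mid y\in P^*,\ 0\le t\le\l(y)\},
$$
the wedge of $P^*$ over $F_v$; this $W$ is simple, and matching its facets and their incidences against the minimal-non-face description of a single wedge in the $J$-construction shows that the boundary complex of the polar dual of $W$ is $\wed_v(\partial P)$. Inducting over the entries of $J$ finishes the polytopal part.

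The hard part will be the two polytopal realizations; the PL~statements are essentially formal once ``subdivisions and suspensions preserve PL~type'' is granted. For $\ss_\sigma$ the delicate point is to check that the perturbed point $x$ really lands in the intended beyond/beneath cell --- that no facet of $P$ avoiding $\sigma$ is crossed while every facet containing $\sigma$ is crossed by an arbitrarily small move --- which is exactly where simpliciality of $P$ and the pointedness of the normal cone at the face $\operatorname{conv}(\sigma)$ enter. For the wedge the delicate point is to verify that $W$ realizes the full predicted face lattice. Both are standard but somewhat fiddly computations with supporting hyperplanes, and that is where I expect most of the effort to go.
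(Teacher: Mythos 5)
The paper itself offers no proof of this proposition—it is imported from the cited references (Ewald--Shephard for stellar subdivisions, Choi--Park and Choi--Jang--Vall\'ee for wedges)—so the benchmark is the standard arguments there, and your outline follows exactly that route: $\ss_\sigma(K)$ as a genuine subdivision, $|\wed_v(K)|\cong_{\mathrm{PL}}|\susp K|$ via the decomposition into $\partial I \ast K_{\hat v}$ and $I\ast\lk_K(v)$ glued along $\partial I\ast\lk_K(v)$, the beneath/beyond construction for polytopality of stellar subdivisions, and the wedge $W$ of the dual simple polytope $P^*$ over the facet $F_v$, whose polar dual realizes $\wed_v(\partial P)$. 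All of that is sound and is essentially the proof in the literature.

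There is, however, one concrete error in the stellar-subdivision polytopality step: it is not true that perturbing the barycenter $y$ of $F=\operatorname{conv}(\sigma)$ in an arbitrary direction $c$ in the relative interior of the normal cone of $P$ at $F$ puts $x$ beyond every facet containing $F$. One needs $a_G\cdot c>0$ for every facet $G\supseteq F$ (where $a_G$ is the outer normal), and a relative-interior point of $\operatorname{cone}\{a_G : G\supseteq F\}$ can fail this: for a triangle with an acute angle at a vertex $v$, the two edge normals at $v$ make an obtuse angle, and directions in the interior of the normal cone close to one extreme ray satisfy $a_{G_2}\cdot c<0$; the resulting $x$ is beyond only one of the two facets through $v$, and $\operatorname{conv}(P\cup\{x\})$ realizes a stellar subdivision at an edge rather than at $v$. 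Simpliciality of $P$ and pointedness of the normal cone, which you invoke as the rescue, do not help (the triangle is simplicial and all its normal cones are pointed). The standard fix—what Ewald--Shephard and Gr\"unbaum's beneath/beyond lemma actually use—is to push $y$ radially away from an interior point $q$ of $P$: for $x=(1+\epsilon)y-\epsilon q$ with $\epsilon>0$ small, $a_G\cdot x-b_G=\epsilon\,(b_G-a_G\cdot q)>0$ precisely for the facets $G$ through $y$, i.e.\ those containing $\sigma$, while $x$ remains strictly beneath all other facets; with that choice of $x$ your beyond/beneath bookkeeping, and hence the whole polytopal part, goes through.
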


For an $(n-1)$-dimensional PL~sphere with $m$ vertices, the \emph{Picard number} of~$K$ is defined as $\pic(K)\coloneq m-n$.
Note that the dimension of $\ss_\sigma (K)$ is $n-1$, and the dimension of $\wed_v(K)$ of $K$ is $n$, while both complexes have $m+1$ vertexes.
Therefore, the stellar subdivision increases the Picard number by $1$, whereas the wedge operation preserves the Picard number:
$$
    \pic(K) = \pic(K(J)) = \pic(\ss_\sigma(K)) -1.
$$
A PL~sphere is said to be \emph{toric colorable} if its Buchstaber number is equal to its Picard number.
\begin{proposition}[\cite{Ewald1986, ewald1996combinatorial}] \label{prop:ss-we_is_colorable}
    Let $K$ be a PL~sphere.
    If $K$ is toric colorable, then both $\ss_\sigma (K)$ and~$K(J)$ are also toric colorable for any $\sigma \in K$ and any positive integer tuple~ $J$:
    $$
        \pic(K) = s(K) = s(K(J)) = s(\ss_\sigma(K)) -1.
    $$
\end{proposition}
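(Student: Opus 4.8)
The plan is to pass through the standard combinatorial description of toric colorability. Recall that a PL~sphere $L$ of dimension $n-1$ on $\ell$ vertices has $s(L)=\pic(L)$ if and only if it admits a \emph{characteristic function}: a map $\lambda$ from its vertex set to $\mathbb{Z}^n$ such that $\{\lambda(w)\mid w\in\tau\}$ is a $\mathbb{Z}$-basis of $\mathbb{Z}^n$ for every facet $\tau$ of $L$ (this is the combinatorial shadow of a free rank-$(\ell-n)$ torus action on $\cZ_L$; see e.g.\ \cite{Buchstaber-Panov2015}, and it is the viewpoint on regular fans used in the cited work of Ewald). Both $\wed_v(K)$ and $\ss_\sigma(K)$ are again PL~spheres by Proposition~\ref{prop:ss-wed_preserve_poly}, so it suffices to produce, starting from a characteristic function $\lambda\colon[m]\to\mathbb{Z}^n$ for $K$, a characteristic function for $\wed_v(K)$ (for each vertex $v$) and one for $\ss_\sigma(K)$ (for each $\sigma\in K$). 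The asserted chain of equalities then follows by combining $s(L)=\pic(L)$ for $L\in\{K,K(J),\ss_\sigma(K)\}$ with the behaviour of $\pic$ under the two operations recorded just before the proposition; and since the wedge is associative via the $J$-construction, it is enough to handle a single wedge $\wed_v(K)$ and iterate to get $K(J)$.

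For $\wed_v(K)$, let $v'$ be the copy of $v$, so this is an $n$-dimensional PL~sphere on $[m]\cup\{v'\}$ and the target lattice is $\mathbb{Z}^{n+1}$. A routine reading of \eqref{eq: wed} shows that its facets are the sets $\{v,v'\}\cup(\tau\setminus\{v\})$ for each facet $\tau$ of $K$ with $v\in\tau$, together with $\{v\}\cup\tau$ and $\{v'\}\cup\tau$ for each facet $\tau$ of $K$ with $v\notin\tau$. Writing $(x,t)\in\mathbb{Z}^{n+1}$ for $x\in\mathbb{Z}^n$ and $t\in\mathbb{Z}$, define $\bar\lambda(v)=(\lambda(v),1)$, $\bar\lambda(v')=(0,\dots,0,1)$, and $\bar\lambda(w)=(\lambda(w),0)$ for $w\in[m]\setminus\{v\}$. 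On each of the three kinds of facet, a cofactor expansion of the $(n+1)\times(n+1)$ matrix formed by the $\bar\lambda$-vectors, along the last row or along the column $\bar\lambda(v')$, reduces the determinant to $\pm$ the $n\times n$ determinant of $\lambda$ on a facet of $K$, hence to $\pm1$. Thus $\bar\lambda$ is a characteristic function, $\wed_v(K)$ is toric colorable, and since $\pic(\wed_v(K))=\pic(K)=s(K)$ we get $s(\wed_v(K))=s(K)$, and therefore $s(K(J))=s(K)$ by iteration.

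For $\ss_\sigma(K)$, write $\sigma=\{u_1,\dots,u_k\}$ and let $v_\sigma$ be the new vertex. From \eqref{eq: ss}, the facets of $\ss_\sigma(K)$ are the facets $\rho$ of $K$ with $\sigma\not\subseteq\rho$, together with the sets $\{v_\sigma\}\cup(\sigma\setminus\{u_i\})\cup\rho$ for $1\le i\le k$ and $\rho$ a facet of $\lk_K(\sigma)$ (equivalently $\sigma\cup\rho$ a facet of $K$). Extend $\lambda$ to $\tilde\lambda$ on $[m]\cup\{v_\sigma\}$ by $\tilde\lambda=\lambda$ on $[m]$ and $\tilde\lambda(v_\sigma)=\lambda(u_1)+\cdots+\lambda(u_k)$. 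Facets of the first kind are untouched, so $\tilde\lambda$ is unimodular there. On a facet of the second kind, $\{\lambda(u_1),\dots,\lambda(u_k)\}\cup\{\lambda(w)\mid w\in\rho\}$ is a $\mathbb{Z}$-basis of $\mathbb{Z}^n$, and $\{\tilde\lambda(v_\sigma)\}\cup\{\lambda(u_j)\mid j\ne i\}\cup\{\lambda(w)\mid w\in\rho\}$ is obtained from it by replacing $\lambda(u_i)$ with $\sum_j\lambda(u_j)$; since that sum has coefficient $1$ on $\lambda(u_i)$ while the remaining members of the tuple have coefficient $0$ on $\lambda(u_i)$, the transition matrix is unipotent, so the new tuple is again a $\mathbb{Z}$-basis. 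Hence $\tilde\lambda$ is a characteristic function, $\ss_\sigma(K)$ is toric colorable, and $s(\ss_\sigma(K))=\pic(\ss_\sigma(K))=\pic(K)+1=s(K)+1$.

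The two constructions are the whole content of the argument; the computations are routine, and the only steps that genuinely need care are the identifications of the facets of $\wed_v(K)$ and $\ss_\sigma(K)$ — these complexes being presented through joins, stars, and links rather than by an explicit facet list — and the two elementary determinant evaluations, the essential point in the stellar case being the linear-algebra observation that replacing one vector $\lambda(u_i)$ of a unimodular tuple by the sum $\sum_j\lambda(u_j)$ of the vectors indexed by $\sigma$ preserves unimodularity. One could instead argue directly with the free torus actions on $\cZ_K$, but the characteristic-function reformulation makes both constructions fully explicit and reduces every verification to linear algebra over $\mathbb{Z}$.
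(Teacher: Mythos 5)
The paper offers no proof of this proposition (it is quoted from Ewald's work), and your characteristic-function argument is exactly the standard proof behind those citations: the equivalence of $s(K)=\pic(K)$ with the existence of a map $\lambda$ unimodular on facets, the wedge handled by the extension $\bar\lambda(v)=(\lambda(v),1)$, $\bar\lambda(v')=(0,\dots,0,1)$, and the stellar subdivision by the regular star-subdivision vector $\tilde\lambda(v_\sigma)=\sum_{u\in\sigma}\lambda(u)$. Your facet identifications and determinant checks are correct, so the proposal is sound and takes essentially the same route as the cited sources.
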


The remainder of this section is devoted to the introduction to the suspension operation.
If the vertex $v$ was a ghost vertex, then the equation \eqref{eq: wed} becomes $\partial I \ast K$.
This is called the \emph{suspension} of $K$.
The pair $\{v,   v'\}$ of vertices of $\partial I \ast K$ is called a \emph{suspended pair} of $\partial I \ast K$, and the vertices~$v$ and~$v'$ are called \emph{suspended vertices}.

\begin{proposition} \cite{CP_wedge_2} \label{prop: facets contain two vertices}
  Let $K$ be a PL~sphere.
  For two vertices $v$ and $w$ of $K$, every facet of $K$ contains $v$ or $w$ if and only if $K$ is the wedge with a wedged edge $\{v,   w\}$ or the suspension with a suspended pair $\{v,   w\}$.
\end{proposition}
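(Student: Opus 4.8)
The plan is to prove both implications by translating everything into the language of minimal non-faces, which is where the wedge is most transparent via the $J$-construction.

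For the ``if'' direction I would first isolate an elementary fact that holds for \emph{any} simplicial complex $K$: if every minimal non-face of $K$ contains both of $v,w$ or neither of them, then every facet of $K$ contains $v$ or $w$. The reason is that a facet $F$ containing neither would give $F\cup\{v\}\notin K$, hence $F\cup\{v\}$ contains a minimal non-face $N$; minimality of $N$ forces $v\in N$, so $w\in N$ by hypothesis, contradicting $w\notin F\cup\{v\}$. Now if $K$ is a wedge with wedged edge $\{v,w\}$, or a suspension with suspended pair $\{v,w\}$, then the $J$-construction description of the minimal non-faces (duplicate a genuine vertex, resp.\ a ghost vertex) shows at once that every minimal non-face contains both of $v,w$ or neither, so the elementary fact applies.

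For the ``only if'' direction the heart of the matter is the converse of that fact \emph{for PL~spheres}: if $K$ is a PL~sphere in which every facet contains $v$ or $w$, then every minimal non-face of $K$ contains both of $v,w$ or neither. Suppose not, say some minimal non-face $N$ has $v\in N$, $w\notin N$, and put $\eta=N\setminus\{v\}\in K$. Then $\eta$ is not a facet of $K$, since it contains neither $v$ nor $w$; hence $\lk_K(\eta)$ is a PL~sphere of dimension $\geq 0$. Moreover $v$ is not a vertex of $\lk_K(\eta)$ because $\eta\cup\{v\}=N\notin K$, while every facet of $\lk_K(\eta)$ has the form $F\setminus\eta$ for a facet $F\supseteq\eta$ of $K$, and since $v,w\notin\eta$ it contains $v$ or $w$, hence necessarily $w$. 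So $w$ lies in every facet of $\lk_K(\eta)$, making $\lk_K(\eta)$ a cone with apex $w$ — impossible for a PL~sphere of nonnegative dimension, which is not contractible. This contradiction proves the claim. Granting it, identifying $v$ and $w$ to a single vertex $u$ produces (reading the $J$-construction backwards) a simplicial complex $K'$ with $\wed_u(K')=K$, the two copies of $u$ being $v$ and $w$; if $\{v,w\}\in K$ then $u$ is a genuine vertex of $K'$ and $\{v,w\}$ is a wedged edge, while if $\{v,w\}\notin K$ then $\{v,w\}$ is itself a minimal non-face, $u$ is a ghost vertex of $K'$, and $K=\partial I\ast K'$ is the suspension with suspended pair $\{v,w\}$.

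I expect the main obstacle to be exactly the claim in the ``only if'' direction, and inside it the cone argument applied to $\lk_K(\eta)$: it relies on stability of PL~spheres under taking links and on the basic fact that a PL~sphere is never a cone, with the degenerate low-dimensional situations ruled out precisely by the observation that $\eta$ is not a facet. A secondary, purely bookkeeping point is to check that ``merging'' $v$ and $w$ really yields a simplicial complex $K'$, i.e.\ that identifying $v$ with $w$ in each minimal non-face of $K$ leaves an antichain; this follows routinely from the ``both or neither'' property and from the hypothesis that no minimal non-face is $\{v\}$ or $\{w\}$.
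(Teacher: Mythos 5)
Your argument is correct. Note that the paper does not prove this proposition at all --- it is quoted from Choi--Park \cite{CP_wedge_2} --- so the comparison can only be with the cited source, whose argument is phrased in terms of stars and links rather than minimal non-faces; your route through minimal non-faces is a legitimate, self-contained alternative, and every ingredient you use is part of the toolkit this paper itself invokes elsewhere: the duplication description of minimal non-faces of a wedge ($J$-construction), the fact that minimal non-faces of a join are the union of those of the factors (which handles your suspension case without needing the duplication rule at a ghost vertex --- a point where your phrasing is slightly loose and where arguing via the join rule is cleaner), the fact that links of faces in a PL~sphere are PL~spheres, and the fact that no single vertex can lie in every facet of a PL~sphere (used verbatim in Case~1 of the paper's Theorem~3.1, and justified by your cone/contractibility remark). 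Two small points you should make explicit in a final write-up: $N\neq\{v\}$ because $v$ is a vertex of $K$, so $\eta=N\setminus\{v\}$ is a nonempty face and $\lk_K(\eta)$ is a genuine PL~sphere of dimension at least $0$; and once the ``both or neither'' property is established, the merged collection of non-faces is an antichain and determines a complex $K'$ whose wedge (respectively suspension, when $\{v,w\}$ is itself a minimal non-face) has exactly the minimal non-faces of $K$ on the same vertex set, hence equals $K$. What your approach buys is a uniform treatment of the wedge and suspension cases and independence from any structural analysis of facets; what the star/link approach of the original buys is directness, exhibiting the base complex explicitly as $\lk_K(w)$ without passing through the non-face combinatorics.
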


\begin{lemma} \label{lemma: susp seed}
  Let $K$ be a seed, $J$ a positive integer tuple, and $v$, $w$ two distinct vertices of $K$.
  If every facet of $K(J)$ contains $v^{(s_v)}$ or $w^{(s_w)}$ for some $0 \leq s_v \leq j_v - 1$ and $0 \leq s_w \leq j_w - 1$, then $\{v,   w\}$ is a suspended pair of $K$.
\end{lemma}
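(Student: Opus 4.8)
The plan is to transfer the hypothesis down from $K(J)$ to $K$ and then quote Proposition~\ref{prop: facets contain two vertices}. Precisely, I will prove that \emph{every facet of $K$ contains $v$ or $w$}. Granting this, Proposition~\ref{prop: facets contain two vertices} tells us that $K$ is either a wedge with wedged edge $\{v,w\}$ or a suspension with suspended pair $\{v,w\}$; since $K$ is a seed the first alternative is impossible, so the second holds, which is exactly the assertion.

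To carry this out I will first record an explicit description of the facets of $K(J)$. Recall from the $J$-construction \cite{BBCG2015} that the minimal non-faces of $K(J)$ are exactly the sets $\widehat{M}:=\bigcup_{u\in M}\{u^{(0)},\dots,u^{(j_u-1)}\}$, one for each minimal non-face $M$ of $K$ (each vertex of $M$ contributes \emph{all} of its copies). For a vertex set $F$ of $K(J)$ put $B_F:=\{u\in[m]\mid u^{(0)},\dots,u^{(j_u-1)}\in F\}$; since $\widehat{M}\subseteq F$ holds precisely when $M\subseteq B_F$, the set $F$ is a face of $K(J)$ if and only if $B_F$ is a face of $K$. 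A short maximality argument then upgrades this: $F$ is a facet of $K(J)$ if and only if $B_F$ is a facet of $K$ and, for every $u\notin B_F$, exactly one copy $u^{(k_u)}$ of $u$ is missing from $F$. Equivalently, the facets of $K(J)$ are precisely the sets
$$
    \Big(\bigcup_{u\in B}\{u^{(0)},\dots,u^{(j_u-1)}\}\Big)\ \cup\ \big\{\, u^{(i)} \mid u\in[m]\setminus B,\ i\neq k_u \,\big\},
$$
where $B$ runs over the facets of $K$ and $(k_u)_{u\notin B}$ over all tuples with $0\le k_u\le j_u-1$.

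With this description the conclusion follows quickly. Fix $s_v$ and $s_w$ as in the statement. Arguing by contradiction, suppose $K$ has a facet $B$ with $v\notin B$ and $w\notin B$. Build the facet $F$ of $K(J)$ associated to $B$ by choosing $k_v:=s_v$, $k_w:=s_w$, and $k_u:=0$ for every remaining $u\notin B$; this is legitimate because $0\le s_v\le j_v-1$ and $0\le s_w\le j_w-1$. By construction $F$ omits the copy $v^{(s_v)}$ of $v$ and the copy $w^{(s_w)}$ of $w$, so $F$ is a facet of $K(J)$ containing neither $v^{(s_v)}$ nor $w^{(s_w)}$, contradicting the hypothesis. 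Hence every facet of $K$ contains $v$ or $w$, and the reduction in the first paragraph finishes the proof.

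The step that needs the most care is the facet description of $K(J)$: one must verify that the sets displayed above are not merely faces but maximal ones, and conversely that an arbitrary facet of $K(J)$ omits exactly one copy of each vertex outside $B_F$ — if it omitted two copies of some such vertex, one of them could be added back without changing $B_F$, contradicting maximality. Everything past that point is just bookkeeping with the indices of the copies, and, pleasantly, no case split between the ``wedge'' and ``suspension'' alternatives of Proposition~\ref{prop: facets contain two vertices} is required, the seed hypothesis ruling out the former at once.
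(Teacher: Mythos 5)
Your proof is correct, and it shares the paper's endgame—show that every facet of $K$ contains $v$ or $w$, then apply Proposition~\ref{prop: facets contain two vertices} and use the seed hypothesis to rule out the wedged-edge alternative—but the reduction from $K(J)$ to $K$ is carried out by a genuinely different mechanism. The paper argues via links: after relabelling copies so that $s_v=s_w=0$ (legitimate, since permuting the copies of a vertex is an automorphism of $K(J)$), it notes that for any copy $u^{(s_u)}$ with $s_u\geq 1$ the passage to $\lk_{K(J)}(u^{(s_u)})$ preserves the property that every facet contains $v^{(0)}$ or $w^{(0)}$, and iterating these links over all extra copies returns exactly $K$. You instead derive from the $J$-construction a complete characterization of the facets of $K(J)$ (a facet is $\widehat{B}$ together with all but exactly one copy of each vertex outside the facet $B$ of $K$), and then lift a hypothetical facet of $K$ avoiding both $v$ and $w$ to a facet of $K(J)$ avoiding $v^{(s_v)}$ and $w^{(s_w)}$ by choosing $k_v=s_v$, $k_w=s_w$. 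Your characterization, including the maximality check that exactly one copy of each vertex outside $B_F$ is omitted, is correct; it buys a self-contained argument that handles arbitrary $s_v,s_w$ with no symmetry/WLOG step and makes the witnessing facet completely explicit, at the cost of proving the facet description, whereas the paper's link argument is a two-line reduction resting only on $\lk_{\wed_v(K)}(v')=K$ and the standard correspondence between facets of a link and facets containing the vertex.
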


\begin{proof}
  Without loss of generality, we may assume that $s_v=s_w=0$.
  For any vertex~$ \neq v, w$ of~$K(J)$, every facet of the link of the vertex still contains~$v=v^{(0)}$ or~$w=w^{(0)}$.
  The repeated link operations with vertices~$u^{(s_u)}$ for all~$u \in [m]$ and~$s_u \geq 1$ gives $K$, and then every facet of~$K$ contains~$v$ or~$w$.
  By Proposition~\ref{prop: facets contain two vertices}, $\{v, w\}$ is a suspended pair of~$K$ since $K$ has no wedged edge.
\end{proof}

The set of minimal non-faces of the join of two simplicial complexes is the union of the sets of minimal non-faces of those.
Then one can easily observe that join and wedge operations are associative and commutative.
Hence we can write a PL~sphere $K$ as \begin{equation} \label{eq: J construction} K = \partial I_1(J_1) \ast \partial I_2(J_2) \ast \dots \ast \partial I_\l(J_\l) \ast L(J_{\l+1}), \end{equation}
where each $I_k$ is a $1$-simplex for $1 \leq k \leq \l$, and $L$ is a seed without a suspended pair.
A PL~sphere is said to be \emph{non-suspended} if it is not the suspension of any other PL~sphere.
The interpretation~\eqref{eq: J construction} of $J$-constructions involving suspensions explains the following.

\begin{proposition} \label{prop: susp wed}
  Let $K$ be a PL~sphere. Then
  \begin{itemize}
    \item $K$ is a suspension if and only if so is its wedge at any vertex $v$ of $K$ not contained in a suspended pair, and
    \item $K$ is a seed if and only if so is the suspension of $K$.
  \end{itemize}
\end{proposition}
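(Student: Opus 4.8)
The plan is to prove the two equivalences separately, handling the one about suspensions through the minimal non-face description of a wedge and the one about seeds through the facet criterion of Proposition~\ref{prop: facets contain two vertices}. For the first bullet, I would begin by recording a reformulation that is immediate from the description of a suspension as $\partial I \ast L$, from the fact that the minimal non-faces of a join are the union of those of its two factors, and from the fact that $\partial I$ has $\{a,b\}$ as its only minimal non-face: \emph{a simplicial complex $L$ is a suspension with suspended pair $\{a,b\}$ if and only if $\{a,b\}$ is a two-element minimal non-face of $L$ while no other minimal non-face of $L$ meets $\{a,b\}$}. Then, fixing a vertex $v$ of $K$ not lying in a suspended pair, I would use that the minimal non-faces of $\wed_v(K)$ are exactly the minimal non-faces $M$ of $K$ with $v \notin M$, together with the sets $M \cup \{v'\}$ over the minimal non-faces $M$ of $K$ with $v \in M$.

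With these two ingredients the first bullet is bookkeeping. If $K$ is a suspension with suspended pair $\{a,b\}$, then $v \notin \{a,b\}$, so $\{a,b\}$ is still a minimal non-face of $\wed_v(K)$; every other minimal non-face of $\wed_v(K)$ comes from a minimal non-face of $K$ distinct from $\{a,b\}$ (hence disjoint from $\{a,b\}$), possibly with $v'$ (also outside $\{a,b\}$) adjoined, so it too is disjoint from $\{a,b\}$, and $\wed_v(K)$ is a suspension. For the converse, suppose $\wed_v(K)$ is a suspension with suspended pair $\{a,b\}$. The crucial step is to exclude $v' \in \{a,b\}$: in that case the minimal non-face $\{a,b\}$ of $\wed_v(K)$ would be $M \cup \{v'\}$ for a minimal non-face $M$ of $K$ with $v \in M$, forcing $M = \{v\}$ by cardinality, which contradicts that $v$ is a genuine vertex of $K$. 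So $v' \notin \{a,b\}$, whence $\{a,b\}$ is already a minimal non-face of $K$ with $v \notin \{a,b\}$; and any further minimal non-face of $K$ meeting $\{a,b\}$ would reappear — with or without $v'$ adjoined — as a minimal non-face of $\wed_v(K)$ that differs from $\{a,b\}$ yet still meets $\{a,b\}$, which is impossible. Hence $K$ is a suspension.

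For the second bullet, the key observation is that a PL~sphere $K$ fails to be a seed if and only if it has an edge $\{a,b\}$ every facet of which contains $a$ or $b$: by Proposition~\ref{prop: facets contain two vertices} this last property means $K$ is a wedge with wedged edge $\{a,b\}$ or the suspension with suspended pair $\{a,b\}$, and since a wedged edge is a face while a suspended pair is a non-face, insisting that $\{a,b\}$ be an edge singles out the wedge case; conversely, a wedge $\wed_u(K_0)$ has its wedged edge $\{u,u'\}$ as such an edge. Now write the suspension of $K$ as $\partial I \ast K$ with suspended pair $\{p,q\}$, so $I$ is a $1$-simplex on two vertices $p,q$ outside the vertex set of $K$; its facets are the sets $\{p\} \cup \tau$ and $\{q\} \cup \tau$ over the facets $\tau$ of $K$, and its edges are the sets $\{p,x\}$ and $\{q,x\}$ for vertices $x$ of $K$ together with the edges of $K$ (the pair $\{p,q\}$ being a non-face). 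If $\partial I \ast K$ is not a seed, choose a witnessing edge $e$. It cannot be $\{p,x\}$ or $\{q,x\}$: then every facet $\{q\} \cup \tau$, respectively $\{p\} \cup \tau$, would contain $x$, putting $x$ in every facet of $K$ and making $K$ a cone — impossible for a PL~sphere. So $e$ is an edge $\{x,y\}$ of $K$, and since $p,q$ are not vertices of $K$, every facet of $\partial I \ast K$ contains $x$ or $y$ precisely when every facet of $K$ does, so $K$ is not a seed. Reading this equivalence in reverse gives the converse, so $K$ is a seed if and only if its suspension is.

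I expect the only genuinely delicate point to be the exclusion of $v' \in \{a,b\}$ in the converse half of the first bullet; the rest is routine manipulation of minimal non-faces and facets together with the elementary fact that a PL~sphere of nonnegative dimension, not being contractible, is not a cone. Degenerate low-dimensional instances, should one wish to treat them, satisfy both statements vacuously.
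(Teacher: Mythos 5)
Your proof is correct, and it takes a more explicit route than the paper, which offers essentially no argument: the paper derives the proposition in one line from the canonical decomposition~\eqref{eq: J construction}, i.e.\ from writing $K$ as a join of wedged suspensions $\partial I_k(J_k)$ with a wedge of a seed having no suspended pair, which in turn rests on the same minimal non-face calculus for joins and wedges that you use. Your first bullet is in the same spirit as that implicit argument (your characterization of a suspension as a $2$-element minimal non-face disjoint from all other minimal non-faces is exactly the combinatorial content behind~\eqref{eq: J construction}), and the delicate point you isolate --- excluding $v'\in\{a,b\}$, equivalently that the wedged edge $\{v,v'\}$ is a face and hence never a minimal non-face --- is precisely what the paper's appeal to the decomposition glosses over. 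Your second bullet is genuinely different in mechanism: instead of the structural decomposition you reformulate ``not a seed'' as the existence of an \emph{edge} $\{a,b\}$ meeting every facet (Proposition~\ref{prop: facets contain two vertices} plus the face/non-face dichotomy between wedged edges and suspended pairs), and then transfer such an edge between $K$ and $\partial I\ast K$ using the fact that a PL~sphere is not a cone --- a fact the paper itself invokes without proof in Case~1 of Theorem~\ref{thm: main}, and which you justify by non-contractibility. The paper's route is shorter once~\eqref{eq: J construction} is taken for granted; yours is self-contained, checks the quantifier over the vertex $v$ carefully (the hypothesis that $v$ lies in no suspended pair is used exactly where it is needed, in the forward direction of the first bullet), and your edge criterion for failing to be a seed is a reusable restatement of Proposition~\ref{prop: facets contain two vertices} that would also streamline parts of the proof of Theorem~\ref{thm: main}.
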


\section{Main construction} \label{sec: main}

Let $K$ be a seed on $[m]$ and $J = (j_1, \ldots, j_m)$ a positive integer tuple.
Recall that $K$ is a subcomplex of $K(J)$.
Then the wedged edge~$\{v, v'\}$ is a face of~$\wed_{v}(K)$, and each assembled face of $K(J)$ is a face of $K(J)$ by Lemma~\ref{lem:special_face_of_K(J)}.
Therefore, both $\ss_{\{v, v'\}}(\wed_v(K))$ and $\ss_{\sigma}(K(J))$ are well-defined.

\begin{theorem} \label{thm: main}
  Let $K$ be a seed on~$[m]$ and~$J=(j_1, j_2, \dots, j_m)$ a positive integer tuple such that $j_v \leq 2$ for all $v \in [m]$.
  For any assembled face $\sigma$ of $K(J)$, a new seed is obtained in two ways depending on the cardinality of $\sigma$.
  \begin{enumerate}
    \item If $\sigma = \{v\}$ for a vertex $v$ of $K$, then $\ss_{\{v,   v' \}}(K(J))$ is a suspended seed.
    \item If $|\sigma| > 1$ and $K$ is non-suspended, then $\ss_{\sigma}(K(J))$ is a non-suspended seed.
  \end{enumerate}
\end{theorem}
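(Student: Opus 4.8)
I would derive both parts from the combinatorial structure of a stellar subdivision together with the criterion of Proposition~\ref{prop: facets contain two vertices}, used in the equivalent form: \emph{a PL~sphere $P$ is neither a wedge nor a suspension if and only if, for every pair $\{a,b\}$ of distinct vertices of $P$, some facet of $P$ avoids both $a$ and $b$}; and $P$ is a seed exactly when it is not a wedge. What I need about a stellar subdivision $\ss_\tau(P)$ at a face $\tau$ with $|\tau|\ge 2$ is read off from~\eqref{eq: ss}: its facets are of type~(a), the facets $F$ of $P$ with $\tau\not\subseteq F$, or of type~(b), the sets $\{v_\tau\}\cup(\tau\setminus\{x\})\cup\lambda$ with $x\in\tau$ and $\lambda$ a facet of $\lk_P(\tau)$; whereas the facets of $P$ that do contain $\tau$ are exactly the sets $\tau\cup\lambda$ with $\lambda$ a facet of $\lk_P(\tau)$.

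For part~(1), the assumption that $\sigma=\{v\}$ is an assembled face forces $j_w=1$ for all $w\ne v$, so $K(J)=\wed_v(K)$; I put $\tau=\{v,v'\}$, and note $\lk_{\wed_v(K)}(\{v,v'\})=\lk_K(v)$ while $\partial\{v,v'\}=\{\varnothing,\{v\},\{v'\}\}$. First, every facet of $\ss_{\{v,v'\}}(\wed_v(K))$ contains $v$ or $v'$: a type-(a) facet contains exactly one of them since every facet of $\wed_v(K)$ does, and a type-(b) facet equals $\{v_\tau,v'\}\cup\lambda$ or $\{v_\tau,v\}\cup\lambda$. Second, $\{v,v'\}$ is not a face of $\ss_{\{v,v'\}}(\wed_v(K))$, since it is deleted with $\st_{\wed_v(K)}(\{v,v'\})$ and no face of $v_\tau\ast\partial\{v,v'\}\ast\lk_K(v)$ contains both $v$ and $v'$. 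By the Proposition, $\ss_{\{v,v'\}}(\wed_v(K))$ is then the suspension with suspended pair $\{v,v'\}$ of the subcomplex $M$ of its faces avoiding $v,v'$, and a short computation from~\eqref{eq: ss} identifies $M$ with $\ss_{\{v\}}(K)$, i.e.\ $K$ with $v$ relabeled. Hence $\ss_{\{v,v'\}}(K(J))\cong\susp(K)$, which is a suspended seed by Proposition~\ref{prop: susp wed}.

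For part~(2), write $L=K(J)$, $\tau=\sigma$. Since $K$ is non-suspended it has no suspended pair, so Proposition~\ref{prop: susp wed} shows no wedge of it is a suspension and hence $L$ is not a suspension; combining this with Proposition~\ref{prop: facets contain two vertices} and Lemma~\ref{lemma: susp seed}, the wedged edges of $L$ are exactly the pairs $\{u^{(0)},u^{(1)}\}$ with $j_u=2$ — this is where $j_v\le2$ enters. Also $\dim K\ge1$, so $\sigma$ has fewer vertices than a facet of $L$ and $\lk_L(\sigma)$ is a PL~sphere of positive dimension, in particular has no cone vertex. I then check that for every pair $\{a,b\}$ of distinct vertices of $\ss_\sigma(L)$ some facet avoids both, which by the Proposition makes $\ss_\sigma(L)$ neither a wedge nor a suspension, hence a non-suspended seed. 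If $b=v_\sigma$: when $a\in\sigma$ any facet of $L$ avoiding $a$ already omits $\sigma$, and such a facet exists; when $a\notin\sigma$, since type-(a) facets avoid $v_\sigma$ I seek a facet of $L$ avoiding $a$ that omits $\sigma$, and if there were none then for each $x\in\sigma$ every facet of $L$ would contain $a$ or $x$, making $\{a,x\}$ a wedged edge of $L$ for every $x\in\sigma$; but $|\sigma|>1$ and distinct wedged edges of $L$ are disjoint, so $\{a,x\}=\{a,x'\}$ for distinct $x,x'\in\sigma$, a contradiction. If $a,b$ are both old vertices: if $L$ has a facet avoiding both that omits $\sigma$, that is a type-(a) facet we want; otherwise every facet of $L$ avoiding $a,b$ contains $\sigma$, and I split on whether $\{a,b\}$ is a wedged edge of $L$. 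If it is not, then $L$ (not a suspension) has a facet, necessarily of the form $\sigma\cup\lambda$, avoiding $a$ and $b$, so $\{v_\sigma\}\cup(\sigma\setminus\{x\})\cup\lambda$ avoids both. If $\{a,b\}=\{u^{(0)},u^{(1)}\}$, then the assembled face $\sigma$ contains exactly one of $a,b$, say $a$; choosing a facet $\lambda$ of $\lk_L(\sigma)$ with $b\notin\lambda$ (possible as $b$ is not a cone vertex of $\lk_L(\sigma)$) and noting $a\notin\lambda$ because $\lambda\cap\sigma=\varnothing$, the type-(b) facet $\{v_\sigma\}\cup(\sigma\setminus\{a\})\cup\lambda$ avoids both $a$ and $b$.

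The delicate step is the final case of part~(2): once $\{a,b\}$ (or $\{a,x\}$ with $x\in\sigma$) is a wedged edge of $L$, every facet of $L$ meets it and only a type-(b) facet can avoid it, and the construction fails unless $\sigma$ meets that edge in a single vertex while $|\sigma|>1$. This is exactly where the three hypotheses are used together: $K$ non-suspended plus Lemma~\ref{lemma: susp seed} pins down the wedged edges of $K(J)$, $\sigma$ being an assembled face forces at most one copy of each wedged vertex into $\sigma$, and $j_v\le2$ makes those two copies the only candidates. The remaining points — the type-(a)/(b) facet list, the identification $M=\ss_{\{v\}}(K)\cong K$, and the "$b=v_\sigma$, $a\notin\sigma$" bookkeeping — I expect to be routine.
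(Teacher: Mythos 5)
Your proposal is correct and takes essentially the same route as the paper: part~(1) via the identification $\ss_{\{v,v'\}}(\wed_v(K))\cong \susp(K)$ together with Proposition~\ref{prop: susp wed}, and part~(2) via the type-(a)/(b) facet decomposition of the stellar subdivision combined with Proposition~\ref{prop: facets contain two vertices} and Lemma~\ref{lemma: susp seed}, with a case analysis on how the vertex pair sits relative to $\sigma$ and $v_\sigma$. The only difference is organizational: the paper assumes a pair meeting every facet and derives contradictions in four cases, while you argue contrapositively, first pinning down the pairs of $K(J)$ met by every facet as the wedged edges $\{u^{(0)},u^{(1)}\}$ with $j_u=2$ and then exhibiting, for every pair, a facet of $\ss_\sigma(K(J))$ avoiding it.
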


\begin{proof}
  Without loss of generality, we may assume $\sigma = \{v \mid j_v = 2 \}$.
  First, let $\sigma = \{v\}$.
  Since $\ss_{\{v, v'\}}(\wed_v(K))$ is isomorphic to the suspension of~$K$, it is a seed by Proposition~\ref{prop: susp wed}.

  Next, let $|\sigma| > 1$ and let $K$ be non-suspended.
  Suppose that $\ss_{\sigma}(K(J))$ has two vertices~$x$ and~$y$ such that every facet of~$\ss_{\sigma}(K(J))$ contains $x$ or $y$.
  By Proposition~\ref{prop: facets contain two vertices}, $\{x, y\}$ is a wedged edge or a suspended pair of~$\ss_{\sigma}(K(J))$.

  From the construction \eqref{eq: ss} of stellar subdivisions, one can observe that the facet set~$\cF$ of~$\ss_{\sigma}(K(J))$ is partitioned into two subsets
  $$
    \cF_1=\cF \cap K(J)\setminus \st_{\sigma}(K(J)) \quad \text{ and } \quad \cF_2 = \cF \cap v_{\sigma} \ast \partial \sigma \ast \lk_{K(J)}(\sigma).
  $$
  The subset~$\cF_1$ consists of the facets of~$K(J)$ not containing $\sigma$, and the subset~$\cF_2$ is involved in the facets of~$K(J)$ that contain $\sigma$.
  Let $\cF_\sigma$ be the set consisting of facets of~$K(J)$ containing $\sigma$.
  Then the facet set of~$K(J)$ is $\cF_1 \cup \cF_\sigma$.
  Since every facet in $\cF_2$ contains $x$ or $y$, there are four possibilities: \begin{enumerate}
                                   \item $x=v_{\sigma}$,
                                   \item $x$, $y \in  \sigma$,
                                   \item $x \in  \sigma$ and $y \not \in \sigma$, and
                                   \item $x$, $y \not \in \sigma$.
                                 \end{enumerate}
  We want to prove that all of these four cases lead to contradictions, so there is no such pairs of vertices~$x$ and~$y$.

  \noindent\textsf{\textbf{Case 1}}: $x=v_{\sigma}$.

  There is no facet in $\cF_1$ containing $x$, so every facet in $\cF_1$ contains $y$.
  For any~$z \in \sigma$ with~$z \not= y$, every facet of~$K(J)$ contains~$y$ or~$z$ since the facets in~$\cF_1$ contain~$y$, and the facets in~$\cF_\sigma$ contain~$\sigma \ni z$.
  If $y \in \sigma$, then every facet of~$K(J)$ contains~$y$, but it is impossible that every facet of a PL~sphere shares one vertex.
  Thus, $y$ is not contained in~$\sigma$.
  
  Suppose that $y \neq z'$ for any $z \in \sigma$.
  Then $\lk_{K(J)}(\{z' \mid z \in \sigma \}) = K$ contains $y$ or $z$ for any~$z \in \sigma$ as we discussed in the proof of Lemma~\ref{lemma: susp seed}.
  By Proposition~\ref{prop: susp wed}, $K$ is a suspension or a wedge, but we assumed that $K$ is a non-suspended seed.
  Thus, there exists $y_0 \in \sigma$ such that $y=y_0'$.
  By Lemma~\ref{lemma: susp seed}, $\{y_0, \, z\}$ is a suspended pair of~$K$ for any other vertices~$z \in \sigma$.
  The assumption that $|\sigma| > 1$ ensures the existence of such $z$, and this contradicts to the assumption that $K$ is non-suspended.

  \noindent\textsf{\textbf{Case 2}}: $x$, $y \in \sigma$.

  The face $\sigma$ of $K(J)$ contains $\{x,   y\}$.
  Then $\{x,   y\}$ is a wedged edge of~$K(J)$ since any facet in~$\cF_1$ contains~$x$ or~$y$, and any facet in~$\cF_\sigma$ contains $\{x,   y\}$.
  Note that $\sigma$ is also a face of $K$.
  Then $x$ and $y$ are two distinct vertices of $K$, and then $\{x,   y\}$ is a suspended pair of $K$ by Lemma~\ref{lemma: susp seed}, but $K$ is assumed to have no suspended pair.

  \noindent\textsf{\textbf{Case 3}}: $x \in \sigma$ and $y \not \in \sigma$.

  Note that $\sigma \setminus \{x\}$ is a facet of the boundary complex~$\partial \sigma$.
  Then for any facet~$\tau$ of~$\lk_{K(J)}(\sigma)$, the set~$\{v_\sigma\} \cup \sigma \setminus \{x\} \cup \tau$ is a facet in~$\cF_2$ not containing $x$.
  Hence it have to contain $y$, but there is a facet~$\tau$ of~$\lk_{K(J)}(\sigma)$ not containing $y$ since it is a PL~sphere, which contradicts to the assumption that every facet of~$\ss_\sigma(K(J))$ contains~$x$ or~$y$.

  \noindent\textsf{\textbf{Case 4}}: $x$, $y \not \in \sigma$.

  Every facet in $\lk_{K(J)}(\sigma)$ contains $x$ or~$y$.
  Since it is a PL~sphere, both~$x$ and~$y$ have to appear in~$\lk_{K(J)}(\sigma)$ as its vertices.
  By the assumption that $j_v \leq 2$ for any vertex~$v$ of~$K$ and the definition of~$\sigma$, $\lk_{K(J)}(\sigma)$ is isomorphic to~$K$.
  By Proposition~\ref{prop: facets contain two vertices}, $\lk_{K(J)}(\sigma)$ is a wedge or a suspension, which contradicts to the assumption that $\lk_{K(J)}(\sigma) \cong K$ is a non-suspended seed.
\end{proof}

\tight

\begin{proof}
    We prove by induction on $p\geq3$ that there exists a polytopal toric colorable seed with $m$ vertices for any $p+2 \leq m \leq 2^p-1$, and the one with $m=2^p-1$ is non-suspended.
    Note that the condition~$p+2 \leq m \leq 2^p-1$ is equivalent to~$2 \leq n \leq 2^p-p-1$.

    For $p=3$, the face structures of a pentagon for $n=2$, a crosspolytope for $n=3$, and $C^4(7)$ for $n=4$ are toric colorable seeds.
    These are all polytopal~\cite{Mani1972}, and the one with $n=4$ is non-suspended.    
  
    Assume that the statement holds for some $p \geq 3$.
    Let $K_n$ be an $(n-1)$-dimensional polytopal toric colorable seed of Picard number $p$ for $2 \leq n \leq 2^p -p -1$.
    Now, it is enough to show the existence of a polytopal toric colorable seed of Picard number~$p+1$ for $2 \leq n \leq 2^{p+1}-p-2$.
    
    For $n=2$, note that the face complex of an $(n+p+1)$-gon is a polytopal toric colorable seed of Picard number $p+1$.
    
    For $3 \leq n \leq 2^p-p$, by (1)~of Theorem~\ref{thm: main}, $\ss_{\{v,v'\}} (\wed_v(K_{n-1}))$ for any vertex $v \in K_{n-1}$ is an $(n-1)$-dimensional seed of Picard number $p+1$ that is polytopal by Proposition~\ref{prop:ss-wed_preserve_poly} and toric colorable by Proposition~\ref{prop:ss-we_is_colorable}.

    For $2^p-p+1 \leq n \leq 2^{p+1}-p-2$, we consider $K_{2^p-p-1}$ that is non-suspended by the induction hypothesis.
    Note that $K_{2^p-p-1}$ has $2^p-1$ vertices.
    Let $\sigma = \{1, 2, 3, \dots, k\}$ for $2 \leq k \leq 2^p-1$, and $J$ the positive integer tuple whose first $k$ components are $2$ and the other components are $1$.
    Then the Picard number and the dimension of~$L_k \coloneq \ss_{\sigma}(K_{2^p-p-1}(J))$ are $p+1$ and~$2^p-p+k-2$, respectively.
    For each $2 \leq k\leq 2^p-1$, $L_k$ is indeed an example what we want for $n=2^p-p+k-1$ since $L_k$ is a non-suspended seed by~(2)~of Theorem~\ref{thm: main}, polytopal by Proposition~\ref{prop:ss-wed_preserve_poly}, and toric colorable by Proposition~\ref{prop:ss-we_is_colorable}.
\end{proof}

\begin{remark} 
If we restrict the assumption of the previous corollary to $p \geq 4$, then all the resulting polytopal seeds can be constructed to be non-suspended as follows.
First, one can easily observe that there exists at least one $(n-1)$-dimensional polytopal toric colorable non-suspended seed for each $n=2, \, 3$, and for the dimensions, all PL~spheres are toric colorable.
Then for a Picard number $p$, if there exists an $(n-1)$-dimensional polytopal toric colorable non-suspended seed with $m$ vertices for each $n \geq 2$ and $m \leq 2^{p}-1$ with $p=m-n$, then we can replace the suspension process in the proof of Corollary~\ref{cor: existence} with~(2)~of Theorem~\ref{thm: main} by constructing an $(n+1)$-dimensional non-suspended seed of Picard number $p+1$.
The obstruction is that for $p=3$ and $n=3$, the only toric colorable seed is the boundary of crosspolytope, which is suspended.
However, we can construct an $(n+1)$-dimensional non-suspended seed of Picard number $4$ from the boundary of a pentagon by applying~(2) of Theorem~\ref{thm: main} with $J = (2,2,2,1,1)$.
\end{remark}

\providecommand{\bysame}{\leavevmode\hbox to3em{\hrulefill}\thinspace}
\providecommand{\href}[2]{#2}

\end{document}